\numberwithin{equation}{section}
\newtheorem{thm}{Theorem}[section]
\newtheorem{prop}{Proposition}[section]
\newtheorem{example}{Example}[section]
\newcommand{\ed}{\end {document}}
\newcommand{\ka}{\kappa}
\begin{document}

\title{On a parabolic sine-Gordon model}

\author[X.Y. Cheng]{Xinyu Cheng}
\address{X.Y. Cheng, Department of Mathematics, University of British Columbia, Vancouver, BC V6T 1Z2, Canada}
\email{xycheng@math.ubc.ca}

\author[D. Li]{ Dong Li}
\address{D. Li, Department of Mathematics, the Hong Kong University of Science \& Technology, Clear Water Bay, Kowloon, Hong Kong}
\email{mpdongli@gmail.com}

\author[C.Y. Quan]{Chaoyu Quan}	
\address{C.Y. Quan, SUSTech International Center for Mathematics, Southern University of Science and Technology,
	Shenzhen, P.R. China}
\email{quancy@sustech.edu.cn}

\author[W. Yang]{Wen Yang}
\address{\noindent W. Yang,~Wuhan Institute of Physics and Mathematics, Chinese Academy of Sciences, P.O. Box 71010, Wuhan 430071, P. R. China; Innovation Academy for Precision Measurement Science and Technology, Chinese Academy of Sciences, Wuhan 430071, P. R. China.}
\email{wyang@wipm.ac.cn}

\begin{abstract}
We consider a parabolic sine-Gordon model with periodic boundary conditions. 
We prove a fundamental maximum principle which gives a priori uniform control of the solution.
In the one-dimensional
case we classify all bounded steady states and exhibit some explicit solutions. 
For the numerical discretization we employ first order  IMEX,  and second order BDF2 discretization without any additional stabilization term. We rigorously prove the energy stability of the numerical schemes under nearly sharp and quite mild time step constraints. We demonstrate the striking
similarity of the parabolic sine-Gordon model with the standard Allen-Cahn equations with double
well potentials. 
\end{abstract}

\maketitle

\section{Introduction}
In this work we are concerned with  the following parabolic sine-Gordon equation:
\begin{equation}
\label{1.sg}
\begin{cases}
\partial_tu=\ka^2\Delta u+\sin u,\quad (t,x)\in (0,\infty)\times \Omega,\\
u\bigr|_{t=0}=u_0,
\end{cases}
\end{equation}
where $\ka^2$ is the diffusion constant and $\Omega$ is either a  periodic torus $\mathbb T=[-\pi,\pi]$ in 1D or the torus $\mathbb T^2=[-\pi,\pi]\times[-\pi,\pi]$ in 2D. The unknown function
$u:\; \Omega \to \mathbb R$ typically represents the concentration difference in the phase field
context.  For smooth solutions, the basic energy associated with \eqref{1.sg} is 
\begin{equation}
\label{1.sg-energy}
E(u)=\int_\Omega\left(\frac{\ka^2}{2}|\nabla u|^2+\cos u\right)dx.
\end{equation}
The fundamental energy conservation law takes the form
\begin{equation}
\label{1.energy-id}
\frac{d}{dt}E(u)+\int_\Omega|\partial_tu|^2dx=0.
\end{equation}
It follows that
\begin{equation}
\label{1.e-de}
E(u(t))\le E(u(s)),\quad \forall\, t\ge s,
\end{equation}
which gives a priori control of the homogeneous $\dot H^1$-norm of the solution. Better estimates are also available. For example assuming $u_0$ is bounded, then by using the fact
that the nonlinear term $\sin u$ is bounded by 1, one can show that the solution remains bounded for all finite time. Bootstrapping from this then easily yields global wellposedness and regularity of the solution. Somewhat akin to the equation \eqref{1.sg} is the following slightly more general
model
\begin{align} \label{vEq}
\partial_{\tau} v = \kappa^2 \Delta v + \gamma \sin \beta v,
\end{align}
where $\beta>0$, $\gamma>0$ are parameters, and we denote by $\tau$ the time variable. One can rewrite \eqref{vEq} as
\begin{align}
\frac {\partial ( \beta v) } {\partial (\gamma \beta \tau ) }
= \frac {\kappa^2} {\gamma \beta} \Delta (\beta v) 
+ \sin (\beta v).
\end{align}
Consequently a change of variable $u=\beta v$, $t= \gamma \beta \tau$ transforms \eqref{vEq}
into the standard form \eqref{1.sg}.

The classical one dimensional sine-Gordon equation
\begin{align} \label{wSG0}
\partial_{tt}\phi - \partial_{xx} \phi = - \sin \phi
\end{align}
dates back at least to Frenkel and Kontorova \cite{fk1939} who considered the motion of
a slip in an infinite chain of atoms lying on top of a given fixed chain of
alike atoms. To study the propagation of the slip they obtained a difference differential equations
which was approximated by the sine-Gordon equation \eqref{wSG0}. 
In the realm of nonlinear
field theory, the sine-Gordon equation 
\begin{align} \label{wSG1}
\partial_{tt}\phi - \partial_{xx} \phi = -m^2 \sin \phi
\end{align}
arises as one of the simplest intrinsically nonlinear theories. The classical
point-like particle theories suffer divergence problems such as the well-known
self-energy problem of electrodynamics. It was realized that (cf. the discussion on pp. 260 of \cite{b1971}) one must consider nonlinear field theory in order to predict both the existence
and dynamics of extended elementary particles. Instead of augmenting a linear theory with
the addition of a nonlinear term ad hoc, a more natural way is to postulate a field whose
target is a nonlinear manifold. In this context the simplest topologically nontrivial manifold
is the standard $1$-sphere, i.e. the set of real numbers modulo $2\pi$. The sine-Gordon
Lagrangian density is postulated as
\begin{align}
\mathcal L_{\mathrm SG} = \frac 12 \bigl(\phi_t^2 -\phi_x^2 - 2m^2(1-\cos \phi) \bigr).
\end{align}
One should note that for small $\phi$, we have $2m^2(1-\cos \phi) \approx m^2 \phi^2$, i.e.
we can recover the usual Klein-Gordon Lagrangian density
\begin{align}
\mathcal L_{\mathrm{KG}} = \frac 12 \bigl( \phi_t^2 -\phi_x^2 -m^2 \phi^2).
\end{align}
The significance of the term $2m^2 (1-\cos \phi)$ is that it is the simplest periodic
function of $\phi$ which coincides with the Klein-Gordon case in the low-amplitude limit.
The periodicity of the nonlinear term has the effect of restricting the range of $\phi$ to be
the $1$-sphere.  Already within the limits of one-dimensional classical field theory, the sine-Gordon
equation \eqref{wSG1} gave a very good picture of the interaction of elementary particles and
the existence of bound states, and in particular it may lead to to solutions with the collisional properties of solitons \cite{ps1962,t1989}. The sine-Gordon equation has also been studied as a model in the theory of crystal dislocations, the motion of rigid pendular attached to stretched wire, and splay waves in lipid membranes and magnetic flux on Josephson line. We refer the interested readers to \cite{b1971,bgn1982,ceg1975,dh2000,f2012,fk1939,h1977,n1983,r1970} and the references therein
for more extensive discussions.

\begin{figure}[!h] \label{figure001}
\centering
\includegraphics[width=0.98\textwidth]{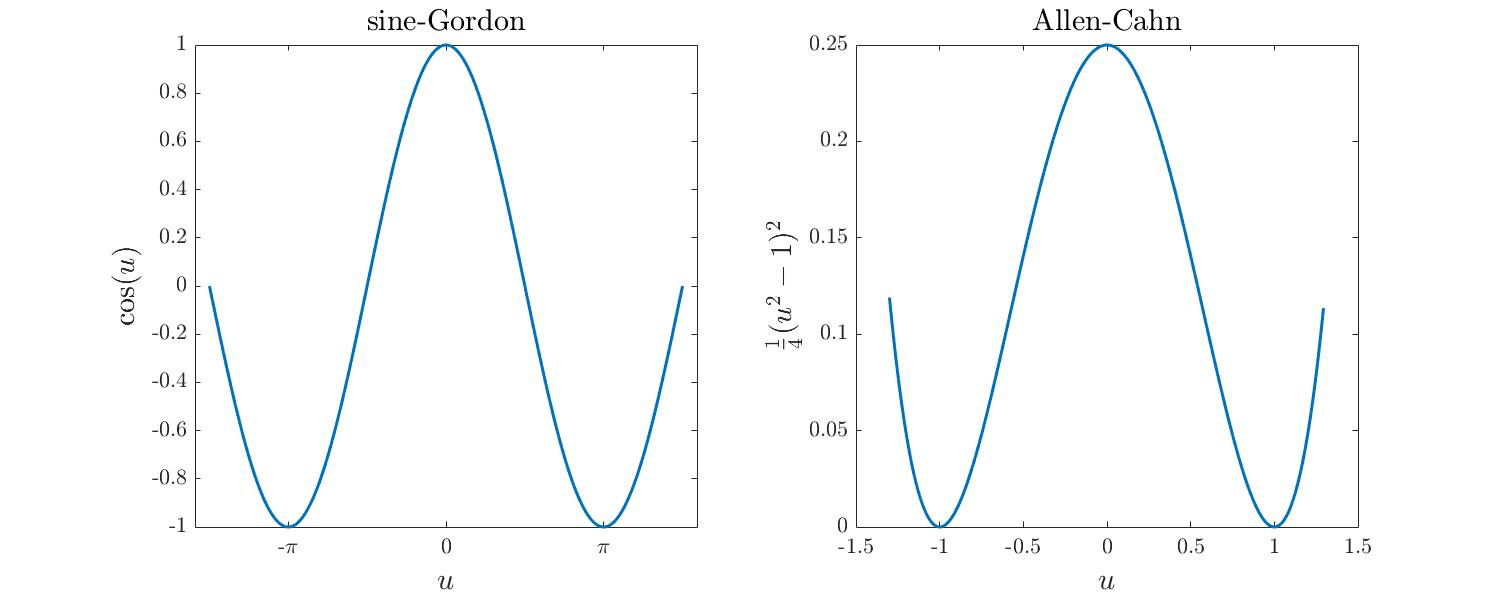}
\caption{\small Comparison of the double-well potentials $F(u)$ between the sine-Gordon model and the Allen-Cahn model with polynomial potential. }
\end{figure}

Our parabolic sine-Gordon model \eqref{1.sg} can be viewed as the parabolic version of the usual
wave-type sine-Gordon in the phase-field context.  It naturally arises from the gradient flow
of the energy functional 
\begin{align} \label{Esg01}
\mathcal E_{\mathrm{SG}} (u) = 
\int_\Omega\left(\frac{\ka^2}{2}|\nabla u|^2+\cos u\right)dx
\end{align}
in $L^2( \Omega)$.  Note that if we consider the $H^{-1}$-gradient flow of 
\eqref{Esg01}, then we obtain the model:
\begin{align} \label{Esg02}
\partial_t u = \Delta(  -\kappa^2 \Delta u - \sin u)
\end{align}
which is akin to the usual Cahn-Hilliard equation. The model \eqref{Esg02} will be studied
elsewhere. As it turns out, the potential term $F_{\mathrm{SG}}(u)=\cos u$ looks qualitatively similar to the usual
double well potential $F_{\mathrm{st}}(u)=(u^2-1)^2/4$ for $|u|=O(1)$, cf. Figure \ref{figure001}.  For this reason it is natural to speculate that there is some natural one-to-one
correspondence between solutions to the parabolic sine-Gordon equation \eqref{1.sg} and
the usual Allen-Cahn equations.  On the other hand, the parabolic
sine-Gordon equation is quite appealing for both analysis and simulation since its nonlinearity has bounded derivatives of all orders. As a matter of fact,  the potential $F_{\mathrm{SG}}(u)
=\cos u$ is one of the handy choices
for testing and benchmarking algorithms in the computational phase field community.
The purpose of this work is to initiate the study of \eqref{1.sg} and establish a number of
basic facts for solutions to \eqref{1.sg}. More importantly we prove the fundamental monotonicity laws
for the solutions, characterize and classify one-dimensional steady states, and analyze the stability of several prototypical numerical discretization schemes implemented on this model. Remarkably due to the very benign
nonlinear structure one can prove optimal energy stability results without resorting to any $L^{\infty}$-maximum
principle. This feature is quite appealing and we expect future development of our analysis on
the model \eqref{Esg02} under slightly more stringent time step constraints.

The rest of this paper is organized as follows. In Section \ref{append:mp_SG} we prove
a fundamental maximum principle of the parabolic sine-Gordon model in all dimensions. 
In Section 3 we classify all bounded steady state solutions in one dimension. In Section 4
we analyze two numerical discretization schemes and prove optimal energy stability. 
In Section 5 we carry out several numerical experiments showcasing the striking similarity
of the parabolic sine-Gordon model and the  usual Allen-Cahn equation.  In the last section
we give concluding remarks.

\section{Maximum principle}\label{append:mp_SG}
In this section we prove a useful maximum principle for \eqref{1.sg} on the the torus
$\mathbb T^d=[-\pi,\pi]^d$ for all dimensions $d\ge 1$.

\begin{thm}[Globalwell posedness and maximum principle]
Let $\kappa>0$ and consider \eqref{1.sg} on $\mathbb T^d=[-\pi, \pi]^d$, $d\ge 1$.
Suppose $\|u_0\|_{\infty} \le \pi$. Then there exists a unique global solution $u$
to \eqref{1.sg} which is smooth for all $t>0$.  Furthermore we have
\begin{align} \label{tP2.1}
\sup_{0\le t <\infty} \| u(t,\cdot )\|_{\infty} \le \pi.
\end{align}
\end{thm}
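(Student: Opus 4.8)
The plan is to combine local well-posedness with a maximum principle argument to get the global bound $\|u(t)\|_\infty\le\pi$, and then bootstrap to smoothness. First I would establish local existence and uniqueness of a smooth solution on some time interval $[0,T_*)$: since the nonlinearity $\sin u$ is smooth and globally Lipschitz (indeed bounded with all derivatives bounded), a standard fixed-point argument on the mild formulation with the heat semigroup $e^{t\kappa^2\Delta}$ on $\mathbb T^d$ gives a unique solution in, say, $C([0,T_*);H^s)$ for $s$ large, and parabolic smoothing makes it smooth for $t>0$. The key observation making the $L^\infty$ bound work is the sign structure of $\sin u$ at the endpoints: at $u=\pi$ we have $\sin\pi=0$, and for $u$ slightly above $\pi$, $\sin u<0$, so the reaction term pushes $u$ back down; symmetrically at $u=-\pi$. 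Thus $u\equiv\pi$ and $u\equiv-\pi$ act as (weak) super- and subsolutions.

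For the maximum principle itself I would argue as follows. Fix $\eps>0$ and consider $w(t,x)=u(t,x)-\pi-\eps t$. Then $w$ satisfies $\partial_t w-\kappa^2\Delta w=\sin u-\eps$. Suppose for contradiction that $w$ attains the value $0$ at some first time $t_0\in(0,T_*)$, at a point $x_0$; before $t_0$ we have $u<\pi+\eps t$, hence at $t_0$ the maximum of $w(t_0,\cdot)$ is $0$ attained at $x_0$. At such an interior space-time maximum (the torus has no boundary, so every point is interior) one has $\partial_t w(t_0,x_0)\ge0$ and $\Delta w(t_0,x_0)\le 0$, so $\partial_t w-\kappa^2\Delta w\ge0$ at $(t_0,x_0)$; but the right-hand side there equals $\sin(u(t_0,x_0))-\eps=\sin(\pi+\eps t_0)-\eps$. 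Since $\eps t_0>0$ is small (we may first restrict to the local existence interval and take $\eps$ small), $\sin(\pi+\eps t_0)<0<\eps$, so the right-hand side is strictly negative, a contradiction. Hence $w<0$ for all $t\in[0,T_*)$, i.e.\ $u(t,x)<\pi+\eps t$; letting $\eps\downarrow0$ gives $u\le\pi$. The symmetric argument with $u+\pi+\eps t$ and a space-time minimum gives $u\ge-\pi$. (Alternatively one can run the cleaner version on $w=(u-\pi)_+$ or compare directly with the spatially constant ODE solutions $y'=\sin y$, $y(0)=\pi$, but the perturbed-test-function version above avoids any regularity subtleties at the maximum.)

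With the a priori bound $\|u(t)\|_\infty\le\pi$ in hand on $[0,T_*)$, global existence follows by the usual continuation argument: the $L^\infty$ bound on $u$ controls $\|\sin u\|_\infty\le1$ uniformly, and then parabolic estimates (Duhamel plus the smoothing of $e^{t\kappa^2\Delta}$) bound all Sobolev norms of $u$ on any finite interval, so the solution cannot blow up in finite time and extends to $[0,\infty)$. The bound \eqref{tP2.1} is then just the $\eps\downarrow0$ limit applied on all of $[0,\infty)$. Smoothness for $t>0$ is immediate from parabolic regularity since $\sin u$ is smooth.

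The main obstacle is purely technical: making the maximum-principle step fully rigorous requires knowing enough regularity of $u$ up to time $t=0$ (so that "the first time $w$ touches $0$" is meaningful and the derivative test applies). This is why I would take $u_0$ smooth first and appeal to local well-posedness in a high Sobolev space before running the comparison; for merely $L^\infty$ or $H^1$ data one mollifies the initial data, proves the bound for the smooth approximations, and passes to the limit using uniqueness and continuous dependence. The sign condition $\sin(\pm\pi)=0$ together with $\sin$ pointing inward just past $\pm\pi$ is exactly what is needed, so no structural difficulty arises — only the bookkeeping of regularity.
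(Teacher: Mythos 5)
Your overall strategy is the same as the paper's: standard local/global well-posedness from the globally Lipschitz nonlinearity, plus a barrier argument at a first touching time exploiting that $\sin$ is negative just past $\pi$, then $\varepsilon\downarrow 0$. Your time-sloped barrier $\pi+\varepsilon t$ is in fact a nice variant: the extra $-\varepsilon$ in $\partial_t w-\kappa^2\Delta w=\sin u-\varepsilon$ yields a direct sign contradiction at the touching point, which is cleaner than the paper's constant barrier $\pi+\varepsilon$, where the authors must follow up the inequality $\partial_t v(t_*,x_*)\le -\sin\varepsilon<0$ with a continuity-plus-covering argument to push the bound slightly past $t_*$.

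However, there is a genuine gap at the endpoint case $\|u_0\|_\infty=\pi$, which the hypothesis explicitly allows (indeed the paper's own experiments use $u_0=\pi\sin x$, which attains $\pm\pi$). Your contradiction setup requires a first touching time $t_0\in(0,T_*)$ with $u<\pi+\varepsilon t$ strictly before $t_0$, and the key inequality $\partial_t w(t_0,x_0)\ge 0$ is obtained by comparing with earlier times. If $u_0$ attains $\pi$ somewhere, then $w=u-\pi-\varepsilon t$ already vanishes at $t=0$, the ``first touching time'' is $t_0=0$, there is no past to compare with, and your argument as written produces no contradiction (knowing $\partial_t w(0,x_0)<0$ does not by itself exclude $w$ becoming positive at nearby points immediately after $t=0$). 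Your mollification remark does not repair this, since mollifying preserves $\|u_0\|_\infty\le\pi$ but not strict inequality. The fix is easy and worth stating: either shift the barrier to $\pi+\varepsilon(1+t)$, so that $w(0,\cdot)\le-\varepsilon<0$ forces $t_0>0$ and the same sign argument applies (still keeping $\varepsilon(1+T)<\pi$ on each finite interval, as you already noted for the slope), or do as the paper does: prove the bound first under the strict hypothesis $\|u_0\|_\infty<\pi$, then treat the equality case by scaling the data, e.g. $u_0^{(n)}=(1-2^{-n-1})u_0$, and passing to the limit via an $L^\infty$-stability estimate from the Duhamel formula. With that one adjustment your proof is complete and matches the theorem as stated, including \eqref{tP2.1}.
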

\begin{proof}
We begin by noting that since the nonlinear term $\sin u$ has uniformly bounded derivatives of all orders, it is utterly standard to obtain the global wellposedness and regularity of the solution to \eqref{1.sg} and thus we focus on the proof of \eqref{tP2.1}.

We first prove \eqref{tP2.1} under the assumption that $\|u_0\|_{\infty}<\pi$. Since $\|u_0\|_{\infty}<\pi$, by using smoothing estimate we may assume with no loss that $u_0$ is smooth and still satisfy $\|u_0\|_{\infty} <\pi$.  Fix $0<\varepsilon\ll 1$ which will be taken to tend to zero later and consider $v(t,x) = u(t,x) - \pi -\varepsilon$. We claim the following:
	\begin{align} \label{vtx01}
		\sup_{t\ge 0} \max_{x\in \mathbb T^d} v(t,x) \le 0.
	\end{align}
	Assume the claim is not true, then we can find  $t_*>0$  such that
	\begin{align}
		&\max_{x\in \mathbb T^d} v(t_*, x) =0,  \\
		&\max_{x\in \mathbb T^d} v(t, x) >0, \qquad t\in (t_*, t_*+\delta_0), \label{vtx03}
	\end{align}
	where $\delta_0>0$ is a sufficiently small constant. Assume $v(t_*,x)$ takes its maximum
	at some $x_*\in \mathbb T^d$. Note that $u(t_*,x_*)=\pi +\varepsilon$. We have
	\begin{align}
		\partial_t v(t, x_*) \Bigr|_{t=t_*} \le \sin (\pi+\varepsilon) =- \sin \varepsilon<0.
	\end{align}
	By continuity, we can find $\delta_*>0$ sufficiently small such that
	if $|t-t_*|+|y-x_*| <3\delta_*$, then
	\begin{align}
		\partial_t v (t,y) <-\frac 12 \sin \varepsilon<0.
	\end{align}
	In particular, for $|y-x_*|<\delta_*$ and $|t-t_*|<\delta_*$, we have
	\begin{align} \label{vtx05a}
		v(t,y) \le 0.
	\end{align}
	Now for $t=t_*$ and any $x\in \mathbb T^d$ such that $v(t,x)<0$, we can find a neighborhood
	$N_x$ and $\delta_x>0$ such that for any $t\in [t_*, t_*+\delta_x]$, $y \in N_x$,
	\begin{align} \label{vtx05b}
		v(t,y)<0.
	\end{align}
	By a covering argument using \eqref{vtx05a} and \eqref{vtx05b} (if $v(t_*,x)=0$ we use
	\eqref{vtx05a}, and if $v(t_*,x)<0$ we use \eqref{vtx05b}), we obtain for $t_*\le t \le t_*+\delta_1$ and $\delta_1>0$
	sufficiently small,
	\begin{align}
		\max_{t_*\le t\le t_*+\delta_1} \max_{x\in \mathbb T^d} v(t,x) \le 0.
	\end{align}
	This clearly contradicts \eqref{vtx03} and thus the claim \eqref{vtx01} holds.
	
	By taking $\varepsilon\to 0$, we obtain $u(t,x) \le \pi$ for all $x\in \mathbb T^d$ and $t\ge 0$.
	By working with $-u$ we obtain $u(t,x) \ge -\pi$ for all $x\in \mathbb T^d$ and $t\ge 0$.
	The desired result then follows easily.
	
Finally we show how to get \eqref{tP2.1} under the assumption that $\|u_0\|_{\infty} \le \pi$.
It suffices for us to show for any finite $T>0$, 
\begin{align} \label{Ft001}
\sup_{0\le t \le T} \|u(t,\cdot )\|_{\infty} \le \pi.
\end{align}
The trick is to use stability.  For $n\ge 1$, consider \eqref{1.sg} with initial data $u_0^{(n)} = (1-2^{-n-1}) u_0$ and denote the corresponding solution as $u^{(n)}$. Apparently we have
\begin{align}
\sup_{0\le t \le T} \| u^{(n)}(t,\cdot) \|_{\infty} \le \pi, \quad\forall\, n\ge 1.
\end{align}
Observe that
\begin{align}
u(t) -u^{(n)}(t) =e^{\kappa^2 t \Delta}
( u_0 - u_0^{(n)} ) + \int_0^t e^{\kappa^2 (t-s) \Delta}
( \sin u(s) -\sin u^{(n)}(s) ) ds.
\end{align}
From this one can extract the $L^{\infty}$-stability estimate of $u-u^{(n)}$. In particular, it is not difficult to check that  
\begin{align}
\sup_{0\le t \le T} \| u(t,\cdot ) -u^{(n)}(t,\cdot) \|_{\infty} \to 0, 
\end{align}
as $n\to \infty$.  Thus \eqref{Ft001} follows.	
\end{proof}

\section{Classification of the steady states in 1D}

In this section we consider bounded steady states of the sine-Gordon equation,
\begin{equation}
\label{a.sg}
\ka^2u''+\sin u=0,\quad x\in\mathbb{R}.
\end{equation}
Note that here we consider the whole real axis for generality. Functions on the torus $\mathbb T
=[-\pi, \pi]$ can be naturally identified as a periodic function on $\mathbb R$. 

\begin{prop}[Rigidity of the solution to \eqref{a.sg}]
\label{Rigid1}
The following hold.
\begin{itemize}
\item \underline{Even reflection}. Suppose $\kappa>0$, and for some $r_0>0$ we have
\begin{align}
\kappa^2 u^{\prime\prime} +\sin u =0, \qquad \forall\, -r_0<x<0,
\end{align}
where $u \in C^2( (-r_0, 0) )$ and we assume $\lim\limits_{x\to 0-} u^{\prime}(x)=0$.
Define $u(x) =u(-x)$ for $0<x<r_0$. Then it holds that $u \in C^{\infty}((-r_0,r_0) )$ with $u^{\prime}(0)=0$ and solving the same equation on the
whole interval.

\item \underline{Odd reflection}. Suppose $\kappa>0$, and for some $r_0>0$ we have
\begin{align}
\kappa^2 u^{\prime\prime} +\sin u =0, \qquad \forall\,  0<x<r_0.
\end{align}
where $u \in C^2( (0, r_0) )$ and we assume $\lim\limits_{x\to 0+} u(x)=0$.
Define $u(x) =-u(-x)$ for $-r_0<x<0$. Then it holds that $u \in C^{\infty}((-r_0,r_0) )$ with $u(0)=0$ and solving the same equation on the
whole interval.
\end{itemize}
\end{prop}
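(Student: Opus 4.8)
The plan is to handle both reflections by the same ``one-sided trace $+$ gluing $+$ bootstrap'' scheme. The one piece of structure that makes everything work is the a priori bound $|u''| = |\sin u|/\kappa^2 \le \kappa^{-2}$: this is what forces the one-sided limits of $u$ and $u'$ at the origin to exist in the first place, a generic $C^2$ function on an open interval need not extend even continuously to an endpoint.

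\textbf{Step 1 (regularity up to the origin from one side).} Consider the even case; the odd case is identical with the roles of $u(0\pm)$ and $u'(0\pm)$ mildly permuted. On $(-r_0,0)$ the bound $|u''|\le\kappa^{-2}$ makes $u'$ Lipschitz, hence $u'$ has a finite limit as $x\to0-$, which by hypothesis is $0$; consequently $u$ is Lipschitz, $u(0-)=:a$ exists, and $u''(x)=-\kappa^{-2}\sin u(x)\to-\kappa^{-2}\sin a$. By the fundamental theorem of calculus the extended $u$ is in fact $C^2$ on $(-r_0,0]$ and solves the equation up to the endpoint, with $u(0)=a$ and $u'(0)=0$. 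In the odd case the same argument gives a $C^2$ extension of $u$ to $[0,r_0)$ solving the equation, with $u'(0+)=:b$ automatically existing and $u(0+)=0$ by hypothesis.

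\textbf{Step 2 (matching the one-sided data and gluing).} For the even extension $u(x):=u(-x)$ one has, on $(0,r_0)$, $u'(x)=-u'(-x)$ and (since the equation is invariant under $x\mapsto -x$) $u''(x)=-\kappa^{-2}\sin u(x)$ again; hence $u'(0+)=-u'(0-)=0$ and $u''(0+)=-\kappa^{-2}\sin a=u''(0-)$, so the one-sided first and second derivatives of the glued function agree at $0$ --- this is precisely where the hypothesis $u'(0-)=0$ enters, being exactly the compatibility condition making the even extension $C^1$. For the odd extension $u(x):=-u(-x)$ one computes, using $\sin(-t)=-\sin t$, that $u'(x)=u'(-x)$ and $u''(x)=-\kappa^{-2}\sin u(x)$ on $(-r_0,0)$; hence $u'(0-)=u'(0+)=b$, $u''(0-)=-\kappa^{-2}\sin 0=0=u''(0+)$, and $u(0-)=-u(0+)=0$, the hypothesis $u(0+)=0$ being the compatibility condition making the odd extension continuous. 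In both cases I then invoke the elementary mean value lemma: if $g$ is continuous near $c$ and differentiable off $c$ with $\lim_{x\to c}g'(x)=L$, then $g'(c)=L$ and $g'$ is continuous at $c$. Applying it to $g=u$ gives $u\in C^1((-r_0,r_0))$ (with $u'(0)=0$, resp.\ $u(0)=0$), and then to $g=u'$ gives $u\in C^2((-r_0,r_0))$; since $\kappa^2u''+\sin u=0$ holds on each side and both sides are continuous up to $0$, it holds on the whole interval. A bootstrap finishes it: if $u\in C^k$ then $u''=-\kappa^{-2}\sin u\in C^k$, so $u\in C^{k+2}$, and iterating yields $u\in C^\infty((-r_0,r_0))$ (indeed real-analytic).

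There is no essential obstacle here; the only subtle point is Step 1, namely noticing that the conclusion genuinely relies on the boundedness of the nonlinearity and not merely on $u\in C^2$ on the open half-interval. An alternative, slightly slicker route bypasses the gluing: $u$ extends to a $C^2$ solution on a one-sided neighborhood of $0$ with prescribed value and slope at $0$; let $w$ be the unique analytic solution of the corresponding initial value problem at $0$; backward uniqueness forces $u=w$ on that neighborhood, and since $x\mapsto w(-x)$ (resp.\ $x\mapsto -w(-x)$) solves the same initial value problem, $w$ is even (resp.\ odd), so the reflected $u$ coincides with $w$ near $0$ and inherits its smoothness.
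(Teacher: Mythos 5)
Your argument is correct and takes essentially the same route as the paper's (very terse) proof: one-sided extension of $u$ and its derivatives to $x=0$ (which the paper asserts without detail and you justify via $|u''|\le \kappa^{-2}$ and the derivative-limit lemma), verification that the reflected function solves the equation with matching one-sided data at the origin, and a bootstrap through $u''=-\kappa^{-2}\sin u$ to conclude $u\in C^{\infty}$. The extra material you add --- the explicit gluing at $0$ and the alternative via uniqueness for the initial value problem --- only fills in or replaces details the paper leaves implicit, without changing the substance.
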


\begin{proof}
We shall only prove the first case as the second case is similar. First it is not difficult to that $u$ has
bounded derivatives in $[-r_0/2, 0)$ which can be extended to $0$ from the left. The
extended $u$ satisfies the equation on $(-r_0, 0) \cap (0,r_0)$. Furthermore the
equation also holds at $x=0$ up to third order derivatives. Then we can bootstrap the regularity
of $u$ by using the equation and conclude that $u \in C^{\infty}$.
\end{proof}

It is easy to see that if $u$ is a solution to \eqref{a.sg}, then for any integer $m\in\mathbb Z$ and
$x_0\in \mathbb R$, $u(\cdot+x_0)+2m \pi$ is still a solution to \eqref{a.sg}. 
Therefore with no loss we can consider solutions $u$ with $|u(0)|\le\pi.$

 Multiplying \eqref{a.sg} by $u'$, we derive 
\begin{equation}
\label{a.id}
\frac12\ka^2(u')^2=C+\cos u,
\end{equation}
where $C\ge -1$ is a constant. 
Concerning the solution of \eqref{a.sg}, we have the following result.
\begin{prop}
\label{pra.1}
Let $u$ be a bounded solution to \eqref{a.sg} with $|u(0)|\le\pi$ and $C\ge -1$ be the constant defined in \eqref{a.id}, then the following hold.
\begin{enumerate}
\item [(1)] For $C>1$, there does not exist any bounded solution.
\item [(2)] If $C=-1$, then $u\equiv0.$
\item [(3)] If $C=1$, then $u=\pm 2\arcsin\tanh\left(\frac{x}{\ka}+c\right)
$ for some constant $c\in\mathbb R$ or $u\equiv \pm \pi$.
\item [(4)] If $-1<C<1$, then $u$ is a periodic function and $\|u\|_{\infty}<\pi$.

\end{enumerate}	
\end{prop}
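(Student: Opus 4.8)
The plan is to read everything off the first integral \eqref{a.id}, namely $\tfrac12\kappa^2(u')^2 = C+\cos u$, together with the sign constraint $C+\cos u\ge 0$ it imposes on any solution defined on all of $\mathbb R$, treating the four ranges of $C$ in turn. \emph{Case $C>1$}: here $C+\cos u\ge C-1>0$ everywhere, so $u'$ never vanishes; being continuous it keeps a fixed sign and $|u'|\ge\sqrt{2(C-1)}/\kappa$, hence $|u(x)-u(0)|\ge\sqrt{2(C-1)}\,|x|/\kappa\to\infty$, contradicting boundedness. \emph{Case $C=-1$}: then $\tfrac12\kappa^2(u')^2=\cos u-1\le 0$ forces $u'\equiv 0$ and $\cos u\equiv 1$, so $u$ is a constant in $2\pi\mathbb Z$, which the normalization $|u(0)|\le\pi$ pins to $u\equiv 0$.

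\emph{Case $C=1$}: using $1+\cos u=2\cos^2(u/2)$, \eqref{a.id} reads $\kappa|u'|=2|\cos(u/2)|$. If $\cos(u/2)$ vanishes at some point $x_0$, then $u(x_0)\in\pi+2\pi\mathbb Z$ and $u'(x_0)=0$; since $u$ solves $\kappa^2u''=-\sin u$ with Lipschitz right-hand side, uniqueness forces $u$ to coincide with the constant equilibrium through $(x_0,u(x_0))$, and then $|u(0)|\le\pi$ gives $u\equiv\pi$ or $u\equiv-\pi$. Otherwise $\cos(u/2)$, hence $u'$, has a fixed sign; separating variables in $\kappa u'=\pm 2\cos(u/2)$, integrating (the antiderivative of $\sec(u/2)$ being a logarithm), and exponentiating yields $u=\pm 2\arcsin\tanh(x/\kappa+c)$. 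Differentiating \eqref{a.id} confirms conversely that these are genuine solutions.

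\emph{Case $-1<C<1$}: set $u_+:=\arccos(-C)\in(0,\pi)$, so that $\{\,C+\cos u\ge 0\,\}=\bigcup_{m\in\mathbb Z}[-u_++2\pi m,\,u_++2\pi m]$. Since $\cos u(0)\ge -C$, $|u(0)|\le\pi$ and $u_+<\pi$, the value $u(0)$ lies in the central interval $[-u_+,u_+]$, and the constraint $C+\cos u\ge 0$ together with continuity confines $u$ to $[-u_+,u_+]$ for all $x$; in particular $\|u\|_\infty\le u_+<\pi$. For periodicity I would argue from the phase portrait of the pendulum equation $\kappa^2u''=-\sin u$: the curve $\tfrac12\kappa^2(u')^2=C+\cos u$ over $u\in[-u_+,u_+]$ in the $(u,u')$-plane is compact and free of equilibria (the equilibria $u=0$ and $u=\pi$ carry energies $-1$ and $1$, both $\ne C$), and the half-period $\int_{-u_+}^{u_+}\kappa\,du/\sqrt{2(C+\cos u)}$ is finite because the only singularities of the integrand, at $u=\pm u_+$, are of the integrable type $(u_+-u)^{-1/2}$ (there $\frac{d}{du}\cos u=-\sin u_+\ne 0$). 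Hence $u$ oscillates between the turning points $\pm u_+$ with a finite period, so it is periodic, and since $u_+\ne 0$ it is non-constant.

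The routine parts are Cases $C>1$, $C=-1$ and the verification that the explicit formulas in Case $C=1$ solve \eqref{a.sg}. The main work sits in Case $-1<C<1$: upgrading the heuristic ``the energy level set is a closed curve'' to a rigorous proof that the solution is genuinely periodic rather than merely bounded, which hinges on the finiteness of the period integral at the turning points. A secondary, purely bookkeeping, issue is the careful handling of signs and branches in Case $C=1$ so as to recover exactly the stated one-parameter family together with the two constants $\pm\pi$.
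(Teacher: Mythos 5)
Your proposal is correct, but in the decisive case $-1<C<1$ it takes a genuinely different route from the paper. The paper solves the reduced ODE $u'=\frac{\sqrt2}{\kappa}\sqrt{C+\cos u}$ on a maximal interval between two consecutive critical points and then invokes its reflection rigidity result (Proposition \ref{Rigid1}) to extend the solution to all of $\mathbb R$ by repeated reflections, so that periodicity and the bound $\|u\|_\infty\le\arccos(-C)$ come out of the construction; the borderline subcase $u(0)=\pm\arccos(-C)$ is handled by stepping slightly inside the turning value and reflecting again. You instead obtain the $L^\infty$ bound directly from the sign constraint $C+\cos u\ge 0$ plus connectedness (the admissible set is a disjoint union of intervals and $u(\mathbb R)$ cannot jump components), and you get periodicity from the classical phase-plane/time-map argument: the level curve is compact and free of equilibria, and the half-period integral $\int_{-u_+}^{u_+}\kappa\,du/\sqrt{2(C+\cos u)}$ converges because the turning-point singularities are of integrable square-root type since $\sin u_+\neq 0$. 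It is worth noting that the finiteness of the paper's maximal interval $[x_-,x_+]$ is exactly this integrability fact, which the paper asserts (``it is not difficult to solve'') and you actually justify; conversely, the paper's reflection argument yields the symmetry of the periodic orbit for free, while your closing step ``returns to the same phase point in finite time, hence periodic'' should explicitly appeal to uniqueness for the initial value problem, which you do invoke in the case $C=1$, so this is a presentational point rather than a gap. Your treatment of $C=1$ is in fact slightly more careful than the paper's terse split into $|u(0)|=\pi$ versus $|u(0)|<\pi$: the dichotomy on whether $\cos(u/2)$ vanishes, combined with Picard--Lindel\"of uniqueness, cleanly rules out a nonconstant solution touching $\pm\pi$. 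The cases $C>1$ and $C=-1$ coincide with the paper's argument.
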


\begin{proof}
We proceed in several steps.

(1) If $C>1$, then $u'$ never changes its sign and it implies that $u$ is either an increasing or a decreasing function. In addition $|u'|$ has a positive lower bound, it implies that $u$ is unbounded. Thus, there is no bounded solution for $C>1$.

(2) If $C=-1$, then 
\begin{equation}
\cos u-1=\frac12\ka^2(u')^2\ge 0,\quad \forall\, x\in \mathbb R.
\end{equation}
Since $|u(0)|\le \pi$, it follows that $u\equiv 0$.

(3) In the case $C=1$, it is easy to check that $u\equiv\pi$ or $-\pi$ is always a solution.  On the other
hand when  $ |u(0)| <\pi $, we can explicitly solve \eqref{a.id} and get
$$u=\pm 2\arcsin\tanh\left(\frac{x}{\ka}+c\right),$$
where $c$ is a constant.

(4) In the case $C\in(-1,1)$, note that $\arccos(-C) \in (0, \pi)$.  By \eqref{a.id} and the assumption
that $|u(0)|\le \pi$,  we have
$|u(0)| \le \arccos(-C)$.  We first discuss the case $|u(0)| <\arccos(-C)$.  In this case \eqref{a.id}
simplifies to
\begin{align} \label{eT2.0}
u^{\prime} = \pm \frac {\sqrt 2} {\kappa} \sqrt{C+\cos u}.
\end{align}
With no loss we consider the case $u^{\prime}(0)>0$ and work with the ODE:
\begin{align} \label{eT2.1}
u^{\prime} =  \frac {\sqrt 2} {\kappa} \sqrt{C+\cos u}.
\end{align}
It is not difficult to solve \eqref{eT2.1} on a maximal interval $[x_-, x_+]$ such that
$-\infty<x_-<0<x_+<\infty$, $u^{\prime}(x_-)=u^{\prime}(x_+)=0$.  Furthermore
$u^{\prime}(x)>0$ for any $x_-<x<x_+$. Clearly on the interval $(x_-,x_+)$, $u$ is
a smooth solution to $\kappa^2 u^{\prime\prime}+\sin u=0$. By 
Proposition \ref{Rigid1}, we can uniquely extend this solution (via repeated reflections)
to the whole real axis. The obtained solution is clearly periodic and satisfies $\|u\|_{\infty}\le
\arccos (-C)$. 

The case $u^{\prime}(0)<0$ is similar since we can work with $-u$ and repeat the argument. 

Finally we consider the case $|u(0)|=\arccos(-C)$.  With no loss we consider $u(0)=
\arccos(-C)$.  Clearly $u^{\prime}(0)=0$. By using $u^{\prime\prime}=-\frac 1 {\kappa^2}
\sin u$, we get $u^{\prime\prime}(0)<0$. We can find $x_0<0$ sufficiently close to $0$ such that 
$u(x_0)<\arccos(-C)$. Starting from the initial value $u(x_0)$, we can then work with the ODE 
\begin{align} 
u^{\prime} =  \frac {\sqrt 2} {\kappa} \sqrt{C+\cos u}, 
\end{align}
and solve it on a maximal interval $[r_-, 0]$, where $-\infty<r_-<0$ and $u^{\prime}(r_-)=
u^{\prime}(0)=0$.
We then apply Proposition  \ref{Rigid1} to obtain the  periodic solution
on the real axis.
\end{proof}

\begin{figure}[!h]
\centering
\includegraphics[width=0.98\textwidth,clip,trim={0 1in 0 0.2in}]{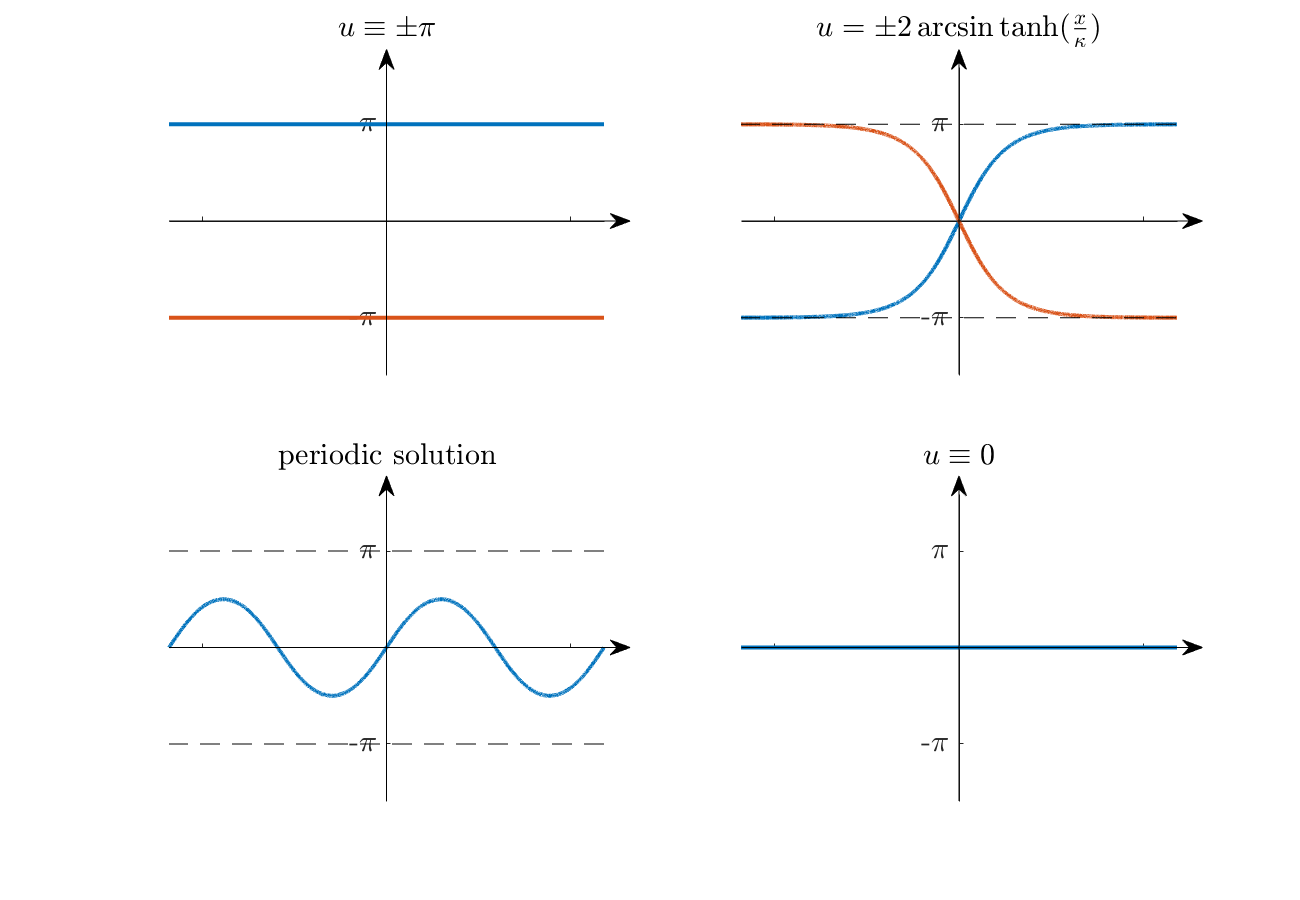}
\caption{\small Different bounded steady states of 1D sine-Gordon equation. Here the periodic
solution corresponds to  $\ka = 0.5$ and $C=0$ in \eqref{a.id}. }
\end{figure}

\section{Numerical schemes}
\subsection{First-order IMEX scheme}
We consider the following first order implicit-explicit (IMEX) scheme:
\begin{equation}\label{eq:sch1}
\frac{u^{n+1}-u^n}{\tau}=\ka^2\Delta u^{n+1}+\sin (u^n),
\end{equation}
where $\tau>0$ denotes the time step, and $u^n:\; \Omega =\mathbb T^d \to \mathbb R$ corresponds to the numerical solution
computed at step $n$. One can rewrite \eqref{eq:sch1} as
\begin{align}
(1-\kappa^2 \Delta) u^{n+1} = u^n +\tau \sin (u^n)
\end{align}
or
\begin{align}\label{eq:4.3.0}
u^{n+1}= (1-\kappa^2 \Delta)^{-1} (u^n +\tau \sin (u^n ) ).
\end{align}
The solvability is not an issue once the initial data $u^0$ is given. In more practical numerical
computations one can employ spectral methods to compute the operator $(1-\kappa^2 \Delta)^{-1}$
very efficiently. 

\begin{thm}[Discrete maximum principle]\label{thm4.1.0}
Consider the scheme \eqref{eq:sch1}. Assume $\|u^0\|_\infty\le \pi$. 
If $0< \tau\le 1$, then $\|u^n\|_\infty\le \pi$ for all $n\ge 1$.
\end{thm}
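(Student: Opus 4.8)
The plan is to run an induction on $n$ using the closed form \eqref{eq:4.3.0}. Rewriting \eqref{eq:sch1} as $(1-\kappa^2\tau\Delta)u^{n+1}=u^n+\tau\sin(u^n)$, i.e. $u^{n+1}=\mathcal S\bigl(u^n+\tau\sin(u^n)\bigr)$ with $\mathcal S:=(1-\kappa^2\tau\Delta)^{-1}$, the claim follows once we establish two facts: (i) $\mathcal S$ does not increase the sup-norm on $\mathbb T^d$; and (ii) under the step constraint $0<\tau\le 1$ the scalar map $s\mapsto s+\tau\sin s$ sends $[-\pi,\pi]$ into itself. Granting these, if $\|u^n\|_\infty\le\pi$ then $|u^n(x)+\tau\sin(u^n(x))|\le\pi$ pointwise by (ii), hence $\|u^{n+1}\|_\infty=\|\mathcal S(u^n+\tau\sin u^n)\|_\infty\le\|u^n+\tau\sin u^n\|_\infty\le\pi$ by (i); the base case $n=0$ is the hypothesis $\|u^0\|_\infty\le\pi$, so the induction closes.

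For (i) I would invoke the elliptic maximum principle. Given $f\in C(\mathbb T^d)$ (smooth in our application), put $w=\mathcal S f$, so $w-\kappa^2\tau\Delta w=f$ on $\mathbb T^d$. At a point $x_*$ where the smooth periodic function $w$ attains its maximum one has $\Delta w(x_*)\le 0$, whence $w(x_*)\le w(x_*)-\kappa^2\tau\Delta w(x_*)=f(x_*)\le\|f\|_\infty$; applying the same argument to $-w$ gives $\min_{\mathbb T^d}w\ge-\|f\|_\infty$. Thus $\|\mathcal S f\|_\infty\le\|f\|_\infty$. Equivalently, one may note $\mathcal S=\int_0^\infty e^{-r}e^{r\kappa^2\tau\Delta}\,dr$ is an average of torus heat semigroups, each of which is convolution against a nonnegative kernel of total mass one, so $\mathcal S$ is convolution against a probability density and is automatically an $L^\infty$ contraction; in the spectral implementation this is transparent since $\mathcal S$ multiplies the $k$-th Fourier mode by $(1+\kappa^2\tau|k|^2)^{-1}\in(0,1]$.

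For (ii), set $g(s)=s+\tau\sin s$. Then $g'(s)=1+\tau\cos s\ge 1-\tau\ge 0$ whenever $0<\tau\le 1$, so $g$ is non-decreasing on $\Real$; in particular, for $s\in[-\pi,\pi]$,
\[
-\pi = -\pi+\tau\sin(-\pi)= g(-\pi)\le g(s)\le g(\pi)=\pi+\tau\sin\pi=\pi,
\]
which is exactly the pointwise bound used above. This is precisely where the hypothesis $\tau\le 1$ enters, and it is essentially sharp since for $\tau>1$ the derivative $g'$ changes sign near $s=\pm\pi$.

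I do not expect a genuine obstacle here: the only mildly technical point is the rigorous justification of the $L^\infty$-non-expansiveness of $\mathcal S$ — ensuring $w$ is truly $C^2$ so that the Laplacian test at the extremum is legitimate — but this is immediate from the smoothing property of $(1-\kappa^2\tau\Delta)^{-1}$ on $\mathbb T^d$ (and completely explicit in the Fourier picture), so the argument is routine.
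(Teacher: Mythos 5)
Your proposal is correct and follows essentially the same route as the paper: the monotonicity of $f_\tau(z)=z+\tau\sin z$ for $0<\tau\le 1$ gives $f_\tau([-\pi,\pi])\subset[-\pi,\pi]$, and combining this with the $L^\infty$-nonexpansiveness of the resolvent $(1-\kappa^2\tau\Delta)^{-1}$ closes the induction. The only difference is that you supply the proof of the resolvent bound (via the elliptic maximum principle or the heat-semigroup subordination), which the paper simply cites, and you correctly include the factor $\tau$ in the resolvent that the paper's equation \eqref{eq:4.3.0} omits.
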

\begin{proof}
Consider the function $f_{\tau}(z) = z+\tau\sin(z)$. It is easy to see that
if $0<\tau\le 1$, $f_{\tau}'(z)\ge 0$ for all $z\in \mathbb R$.
Thus, $\max_{|z|\le \pi} |f_\tau(z)| \le \pi$.
The desired result then follows from \eqref{eq:4.3.0} together with a maximum principle for the operator $(1-\kappa^2 \Delta)^{-1}$, i.e., $\|(1-\kappa^2 \Delta)^{-1}\|_{L^\infty\rightarrow L^\infty}\le 1$, cf. \cite{Li2013a,Li2013b}.
\end{proof}

Concerning the energy dissipation, we have the following result. 
Note that the time step constraint is $0<\tau\le 2$, which is wider than the one given by Theorem \ref{thm4.1.0}. 
This is because we do not need to use the maximum principle in the proof. 
\begin{thm}[Energy dissipation]\label{thm3.1}
If $0<\tau\leq 2$, then the following energy dissipation law holds for the scheme \eqref{eq:sch1}:
\begin{equation}
E(u^{n+1}) \le  E(u^{n}), \quad \forall \, n\geq 0,
\end{equation}
where 
\begin{equation}
E(u^n) = \frac{\ka^2}2\|\nabla u^{n}\|^2 + \int_{\Omega} \cos(u^{n}) dx.
\end{equation}
Here $\|\cdot\|=\|\cdot\|_2$. 
\end{thm}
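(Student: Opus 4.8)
The plan is to prove this by a direct energy computation: test the scheme against the increment $w:=u^{n+1}-u^n$ and combine the result with a second-order Taylor expansion of the potential $\cos$. The key structural point, and the reason the constraint here ($\tau\le 2$) is milder than the one in Theorem \ref{thm4.1.0}, is that the second derivative of $\cos$ is bounded by $1$ everywhere, so no pointwise control of $u^n$ is needed.

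First I would split $E(u^{n+1})-E(u^n)$ into the Dirichlet part and the potential part. For the Dirichlet part, the polarization identity together with periodic integration by parts on $\mathbb{T}^d$ gives
\begin{align}
\frac{\kappa^2}{2}\left(\|\nabla u^{n+1}\|^2-\|\nabla u^n\|^2\right)=-\kappa^2\langle\Delta u^{n+1},w\rangle-\frac{\kappa^2}{2}\|\nabla w\|^2.
\end{align}
Substituting the scheme \eqref{eq:sch1} in the form $\kappa^2\Delta u^{n+1}=\tau^{-1}w-\sin(u^n)$ turns this into
\begin{align}
\frac{\kappa^2}{2}\left(\|\nabla u^{n+1}\|^2-\|\nabla u^n\|^2\right)=-\frac1\tau\|w\|^2+\langle\sin(u^n),w\rangle-\frac{\kappa^2}{2}\|\nabla w\|^2.
\end{align}

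For the potential part, Taylor's formula with Lagrange remainder gives, pointwise in $x$, $\cos(u^{n+1})-\cos(u^n)=-\sin(u^n)\,w-\tfrac12\cos(\xi)\,w^2$ for some intermediate value $\xi=\xi(x)$, and since $|\cos(\xi)|\le 1$ we get
\begin{align}
\int_\Omega\left(\cos(u^{n+1})-\cos(u^n)\right)dx\le-\langle\sin(u^n),w\rangle+\frac12\|w\|^2.
\end{align}
Adding the two contributions, the cross term $\langle\sin(u^n),w\rangle$ cancels and we obtain
\begin{align}
E(u^{n+1})-E(u^n)\le-\left(\frac1\tau-\frac12\right)\|w\|^2-\frac{\kappa^2}{2}\|\nabla w\|^2,
\end{align}
which is $\le 0$ whenever $\tau\le 2$.

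There is no serious obstacle here; the argument is routine once set up correctly. The only points requiring mild care are the sign of the Taylor remainder, so that the estimate $|\cos\xi|\le 1$ is used in the favorable direction, and the bookkeeping showing that the $\tfrac12\|w\|^2$ generated by the potential is exactly the term competing against the $\tfrac1\tau\|w\|^2$ coming from numerical dissipation, which is what makes $\tau=2$ the sharp threshold for this scheme. I would also remark explicitly that the identity uses only periodicity (no boundary terms) and holds in all dimensions $d\ge 1$.
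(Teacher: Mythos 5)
Your proposal is correct and follows essentially the same route as the paper: test the scheme against $u^{n+1}-u^n$, use the polarization identity for the gradient term, and a second-order Taylor expansion of $\cos$ with $|\cos\xi|\le 1$, so the cross term cancels and the threshold $\tau\le 2$ emerges from $\frac1\tau-\frac12\ge 0$. The only cosmetic difference is that you bound $\cos\xi$ by $1$ and keep the $-\frac{\kappa^2}{2}\|\nabla(u^{n+1}-u^n)\|^2$ term explicitly, whereas the paper keeps $\cos\xi^n$ in the final inequality and discards the gradient-difference term; these are the same argument.
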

\begin{proof}
Multiplying \eqref{eq:sch1} by $u^{n+1}-u^n$ and integrating over $\Omega$, we have
\begin{equation}
\frac{1}{\tau} \| u^{n+1}-u^n \|^2
=-\ka^2\langle \nabla u^{n+1},\nabla u^{n+1}-\nabla u^n\rangle_\Omega+\langle\sin(u^n)(u^{n+1}-u^n),1\rangle_\Omega,
\end{equation}
where $\langle \cdot,\cdot\rangle_\Omega$ denotes the $L_2$ inner product on $\Omega$, i.e.
\begin{align}
\langle f , g \rangle_{\Omega} = \int_{\Omega} f(x) g(x) dx,
\qquad\text{for $f$, $g:\Omega \to \mathbb R$}.
\end{align}
It follows that
\begin{align}\label{ineq:3.4}
&\frac{\ka^2}{2}\|\nabla u^{n+1}\|^2 + \int_{\Omega} \cos(u^{n+1}) dx- \frac{\ka^2}{2}\|\nabla u^{n}\|^2 - \int_{\Omega} \cos(u^{n})dx  \notag \\
\le &\; -  \left\langle \frac 1 \tau -\frac 1 2\cos(\xi^n), (u^{n+1}-u^n)^2\right\rangle_\Omega,
\end{align}
where $\xi^n  $ is some function between $u^n$ and $u^{n+1}$.
Obviously, when $0<\tau\leq 2$, the right-hand sider of the above inequality \eqref{ineq:3.4} is always non-positive.
\end{proof}

\subsection{Second-order BDF2 scheme}
We consider the following BDF2 scheme of the sine-Gordon equation
\begin{equation}\label{eq:sch2}
\frac{3u^{n+1}-4u^n+u^{n-1}}{2\tau}=\ka^2\Delta u^{n+1}+2 \sin (u^n)-\sin(u^{n-1}), \qquad n\ge 1.
\end{equation}
To kick start the scheme one can compute $u^1$ using a first order scheme such as \eqref{eq:sch1}. 
We have the following modified energy dissipation law for this second-order scheme.
\begin{thm}[Energy dissipation]\label{thm3.2}
If $0<\tau\leq \frac12$, then the energy dissipation law holds for scheme \eqref{eq:sch2}
\begin{equation}
\widetilde E(u^{n+1}) \leq  \widetilde E(u^{n}), \quad \forall\, n\ge 1,
\end{equation}
where
\begin{align}
\widetilde E(u^{n})  &= E(u^n)+\frac 1{4\tau} \| u^{n}-u^{n-1}\|^2 \notag \\
&=\frac 12 \kappa^2 \|\nabla u^{n}\|^2 +\int_{\Omega} \cos(u^{n}) dx+\frac 1{4\tau} \| u^{n}-u^{n-1}\|^2
\end{align}
is the modified energy.
\end{thm}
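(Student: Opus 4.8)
The plan is to mimic the standard ``$G$-norm'' / inner-product technique for BDF2, combined with the Taylor-expansion trick already used in the proof of Theorem \ref{thm3.1}. First I would test the scheme \eqref{eq:sch2} against the discrete time increment $u^{n+1}-u^n$ in the $L^2(\Omega)$ inner product. The left-hand side produces $\frac{1}{2\tau}\langle 3u^{n+1}-4u^n+u^{n-1},u^{n+1}-u^n\rangle_\Omega$, and here I would invoke the classical BDF2 algebraic identity
\begin{align}
\langle 3a-4b+c,\,a-b\rangle
= \frac{\|a-b\|^2+\|2a-b\|^2 -\|b-c\|^2-\|2b-c\|^2}{2}+\frac{\|a-2b+c\|^2}{2},
\end{align}
applied pointwise with $a=u^{n+1}$, $b=u^n$, $c=u^{n-1}$ and integrated over $\Omega$. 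This is exactly what generates the modified energy term $\frac{1}{4\tau}\|u^n-u^{n-1}\|^2$ together with a genuinely dissipative leftover $\frac{1}{4\tau}\|u^{n+1}-2u^n+u^{n-1}\|^2$.

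Next I would handle the diffusion term: $-\kappa^2\langle\nabla u^{n+1},\nabla u^{n+1}-\nabla u^n\rangle_\Omega \le -\frac{\kappa^2}{2}\|\nabla u^{n+1}\|^2+\frac{\kappa^2}{2}\|\nabla u^n\|^2$ by the elementary inequality $\langle p,p-q\rangle\ge \frac12\|p\|^2-\frac12\|q\|^2$, which contributes the $\frac{\kappa^2}{2}\|\nabla u^n\|^2$ pieces of $E$. For the nonlinear term I would write, as in Theorem \ref{thm3.1},
\begin{align}
\int_\Omega \cos(u^{n+1})\,dx - \int_\Omega \cos(u^n)\,dx
= -\int_\Omega \sin(u^n)(u^{n+1}-u^n)\,dx + \frac12\int_\Omega \cos(\xi^n)(u^{n+1}-u^n)^2\,dx,
\end{align}
for some $\xi^n$ between $u^n$ and $u^{n+1}$. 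Thus the ``true'' explicit term $\langle 2\sin u^n-\sin u^{n-1},u^{n+1}-u^n\rangle_\Omega$ must be compared with $\langle \sin u^n, u^{n+1}-u^n\rangle_\Omega$; the discrepancy is $\langle \sin u^n - \sin u^{n-1}, u^{n+1}-u^n\rangle_\Omega$, which I would bound using $|\sin u^n-\sin u^{n-1}|\le |u^n-u^{n-1}|$ followed by Cauchy--Schwarz and Young: $\langle \sin u^n-\sin u^{n-1},u^{n+1}-u^n\rangle_\Omega \le \frac{1}{4\alpha}\|u^n-u^{n-1}\|^2+\alpha\|u^{n+1}-u^n\|^2$ for a parameter $\alpha>0$ to be optimized.

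Collecting everything, $\widetilde E(u^{n+1})-\widetilde E(u^n)$ is bounded above by an expression of the form
\begin{align}
-\Big(\frac{1}{\tau}-\frac{1}{2}|\cos\xi^n|-\alpha\Big)\|u^{n+1}-u^n\|^2
-\Big(\frac{1}{4\tau}-\frac{1}{4\alpha}\Big)\|u^n-u^{n-1}\|^2
-\frac{1}{4\tau}\|u^{n+1}-2u^n+u^{n-1}\|^2
+(\text{cross terms}).
\end{align}
The main obstacle, and the reason the constraint degrades to $0<\tau\le\frac12$, is making all three parenthesized coefficients nonnegative simultaneously: one needs $\alpha\le\frac{1}{4\tau}$ from the $\|u^n-u^{n-1}\|^2$ term (note the incoming $\frac{1}{4\tau}\|u^n-u^{n-1}\|^2$ sits inside $\widetilde E(u^n)$ with a sign that helps), while the $\|u^{n+1}-u^n\|^2$ term needs $\frac{1}{\tau}\ge \frac12+\alpha$ using the crude bound $|\cos\xi^n|\le 1$. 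Choosing $\alpha=\frac{1}{4\tau}$ and also accounting for the possible leakage of $\|\nabla u^{n+1}\|^2$ or mixed terms, a short computation shows $\tau\le\frac12$ suffices. I would present the bookkeeping carefully since the sign of each term is the entire content of the theorem; the algebraic identity for the BDF2 trilinear form is the one nonroutine ingredient, and I would either cite a standard reference or verify it by direct expansion in a displayed line.
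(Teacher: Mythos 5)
Your overall route is the same as the paper's (test \eqref{eq:sch2} against $u^{n+1}-u^n$, Taylor-expand $\cos$ to second order with the intermediate point $\xi^n$, and control the extrapolation error $\langle \sin u^n-\sin u^{n-1},\,u^{n+1}-u^n\rangle_\Omega$ by the Lipschitz bound and Young-type inequalities), but two steps as written do not survive scrutiny. First, your displayed ``BDF2 identity'' is false: with $a=b=1$, $c=0$ the left side is $0$ while your right side equals $-\tfrac32$. The identity involving $\|2a-b\|^2$ and $\|2b-c\|^2$ is the classical one for the pairing $\langle 3a-4b+c,\,a\rangle$; for the pairing against $a-b$ that you actually need, the correct statement is
\begin{equation*}
\langle 3a-4b+c,\,a-b\rangle=\tfrac52\|a-b\|^2-\tfrac12\|b-c\|^2+\tfrac12\|a-2b+c\|^2,
\end{equation*}
which is exactly what the paper obtains via the splitting \eqref{4.12t0}; after dividing by $2\tau$ it yields $\tfrac1\tau\|\delta u^{n+1}\|^2+\tfrac1{4\tau}\bigl(\|\delta u^{n+1}\|^2-\|\delta u^{n}\|^2+\|\delta u^{n+1}-\delta u^{n}\|^2\bigr)$ with $\delta u^n=u^n-u^{n-1}$, and this is the only source of the modified-energy terms.

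Second, your bookkeeping double-counts the term $\tfrac1{4\tau}\|u^n-u^{n-1}\|^2$. The single $-\tfrac1{4\tau}\|\delta u^n\|^2$ produced by the identity is exactly what converts the $E$-difference into the $\widetilde E$-difference; once that telescoping is done there is no leftover negative $\|\delta u^n\|^2$ budget, so the final bound cannot contain a helpful $-\bigl(\tfrac1{4\tau}-\tfrac1{4\alpha}\bigr)\|u^n-u^{n-1}\|^2$ term. The $+\tfrac1{4\alpha}\|\delta u^n\|^2$ coming from your Young inequality must instead be absorbed through $\|\delta u^n\|\le\|\delta u^{n+1}-\delta u^n\|+\|\delta u^{n+1}\|$ into the remaining negative terms $-\tfrac1{4\tau}\|\delta u^{n+1}-\delta u^n\|^2$ and $-\bigl(\tfrac1\tau+\tfrac12\cos\xi^n\bigr)\|\delta u^{n+1}\|^2$. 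Doing this with your stated choice $\alpha=\tfrac1{4\tau}$ only gives the threshold $\tau\le\tfrac1{2\sqrt2}$, short of the claimed $\tau\le\tfrac12$. The paper reaches $\tau\le\tfrac12$ exactly by estimating the cross term as $\|\delta u^n\|\,\|\delta u^{n+1}\|\le\tfrac12\|\delta u^{n+1}-\delta u^n\|^2+\tfrac32\|\delta u^{n+1}\|^2$ (equivalently, taking Young weights corresponding to $\alpha=2\tau$ after the splitting $\|\delta u^n\|^2\le2\|\delta u^{n+1}-\delta u^n\|^2+2\|\delta u^{n+1}\|^2$), which balances both coefficients precisely at $\tau=\tfrac12$; with that correction, and with the corrected identity above, your argument coincides with the paper's proof.
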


\begin{proof}
We first observe that
\begin{align} \label{4.12t0}
\frac{3u^{n+1}-4u^n+u^{n-1}}{2\tau}
= \frac {u^{n+1}-u^n}{\tau} + \frac{u^{n+1}-2u^n+u^{n-1}}{2\tau}.
\end{align}
Multiplying \eqref{eq:sch2} by $u^{n+1}-u^n$ and integrating over $\Omega$, we obtain
\begin{equation}\label{eq:bdf2int}
\left\langle\frac{3u^{n+1}-4u^n+u^{n-1}}{2\tau},u^{n+1}-u^n \right\rangle_\Omega =\left\langle\ka^2\Delta u^{n+1}+2 \sin (u^n)-\sin(u^{n-1}), u^{n+1}-u^n\right\rangle_\Omega.
\end{equation}
Denote $\delta u^n = u^n-u^{n-1}$. By using \eqref{4.12t0}, we can rewrite
the left-hand side of \eqref{eq:bdf2int} as
\begin{equation}
{\mathrm {LHS}} = \frac 1 \tau \|\delta u^{n+1}\|^2+ \frac 1{4\tau}\left( \|\delta u^{n+1}\|^2 -  \|\delta u^{n}\|^2+  \|\delta u^{n+1}-\delta u^n\|^2\right).
\end{equation}
Observe that
\begin{align} \label{4.15t0}
\cos u^{n+1} = \cos u^n - \sin u^n \delta u^{n+1} - \frac 12 \cos \xi^n (\delta u^{n+1})^2,
\end{align}
where $\xi^n$ is a function between $u^n$ and $u^{n+1}$. 
By using \eqref{4.15t0}, we rewrite the right-hand side of \eqref{eq:bdf2int} as
\begin{equation}
\begin{aligned}
{\mathrm {RHS}}
&\le  \frac{\ka^2}{2}\|\nabla u^{n}\|^2 +\int_{\Omega} \cos(u^{n})dx - \frac{\ka^2}{2}\|\nabla u^{n+1}\|^2 -
\int_{\Omega}\cos(u^{n+1})  dx\\
&\qquad- \frac 1 2 \left\langle \cos(\xi^n), (\delta u^{n+1})^2\right\rangle_\Omega 
+ \left\langle \sin(u^n)-\sin(u^{n-1}), \delta u^{n+1}\right\rangle_\Omega.
\end{aligned}
\end{equation}
Note that
\begin{equation}\label{eq:3.9}
\begin{aligned}
\left\langle \sin(u^n)-\sin(u^{n-1}), \delta u^{n+1}\right\rangle_\Omega
& \le \|\delta u^n\| \|\delta u^{n+1}\| \le  \frac 1 2 \|\delta u^{n+1} -\delta u^n\|^2
+\frac 3 2 \|\delta u^{n+1}\|^2.
\end{aligned}
\end{equation}
Collecting the estimates, we have
\begin{align}
& \frac 1 \tau \|\delta u^{n+1}\|^2+ \frac 1{4\tau}\left( \|\delta u^{n+1}\|^2 -  \|\delta u^{n}\|^2+  \|\delta u^{n+1}-\delta u^n\|^2\right) \notag \\
\le &\frac{\ka^2}{2}\|\nabla u^{n}\|^2 +\int_{\Omega} \cos(u^{n})dx - \frac{\ka^2}{2}\|\nabla u^{n+1}\|^2 -
\int_{\Omega}\cos(u^{n+1})  dx \notag \\
& \quad -\langle \frac 12 \cos \xi^n - \frac 3 2, (\delta u^{n+1} )^2 \rangle + \frac 12 \| \delta u^{n+1} -\delta u^n \|^2.
\end{align}
Thus we obtain
\begin{equation}\label{ineq:3.13}
\begin{aligned}
& E(u^{n+1}) -E(u^n) + \frac 1{4\tau} \|\delta u^{n+1}\|^2 - \frac 1{4\tau} \|\delta u^{n}\|^2\\
& \leq   -\left\langle \frac1\tau +\frac 1 2\cos(\xi^n) - \frac 3 2, (\delta u^{n+1})^2\right\rangle_\Omega -\left(\frac 1{4\tau} -\frac 1 2\right) \|\delta u^{n+1} -\delta u^n\|^2.
\end{aligned}
\end{equation}
When $0<\tau\leq \frac 1 2$, it is obvious that the right-hand side of \eqref{ineq:3.13} is non-positive so that $\widetilde E(u^{n+1}) \leq \widetilde E(u^{n}) $ holds.
\end{proof}

\section{Numerical experiments}

\begin{example} \label{exam1DACsin}
{\em Consider the 1D sine-Gordon equation
\begin{align}\label{eq:SG}
& \partial_t u = \ka^2\partial_{xx} u + \sin(u), \quad\mbox{on }\mathbb T=[-\pi,\pi],
\end{align}
with $\ka = 0.1$ and $u_0(x)= \pi \sin(x)$.
}
\end{example}

We adopt the first order IMEX scheme \eqref{eq:sch1} to solve this 1D sine-Gordon equation.
For the spatial discretization, we use the pseudo-spectral method with the number of Fourier modes $N = 256$.
On the left-hand side of Figure \ref{fig:1DSG},  we plot the numerical solutions  at $T= 42$ which are computed
using time steps $\tau = 0.1,~2,~2.1$ respectively.
The corresponding energy evolutions are depicted in on the right-hand side of Figure \ref{fig:1DSG}.
It can be observed that when $\tau = 0.1$ and $2$, the energy decays monotonically in time.
However, when $\tau = 2.1$, the energy does not always decay.
This indicates that the time step restriction in Theorem \ref{thm3.1} is optimal.

\begin{figure}[!h]
\centering
\includegraphics[width=0.98\textwidth,clip,trim={0 0.15in 0 0.3in}]{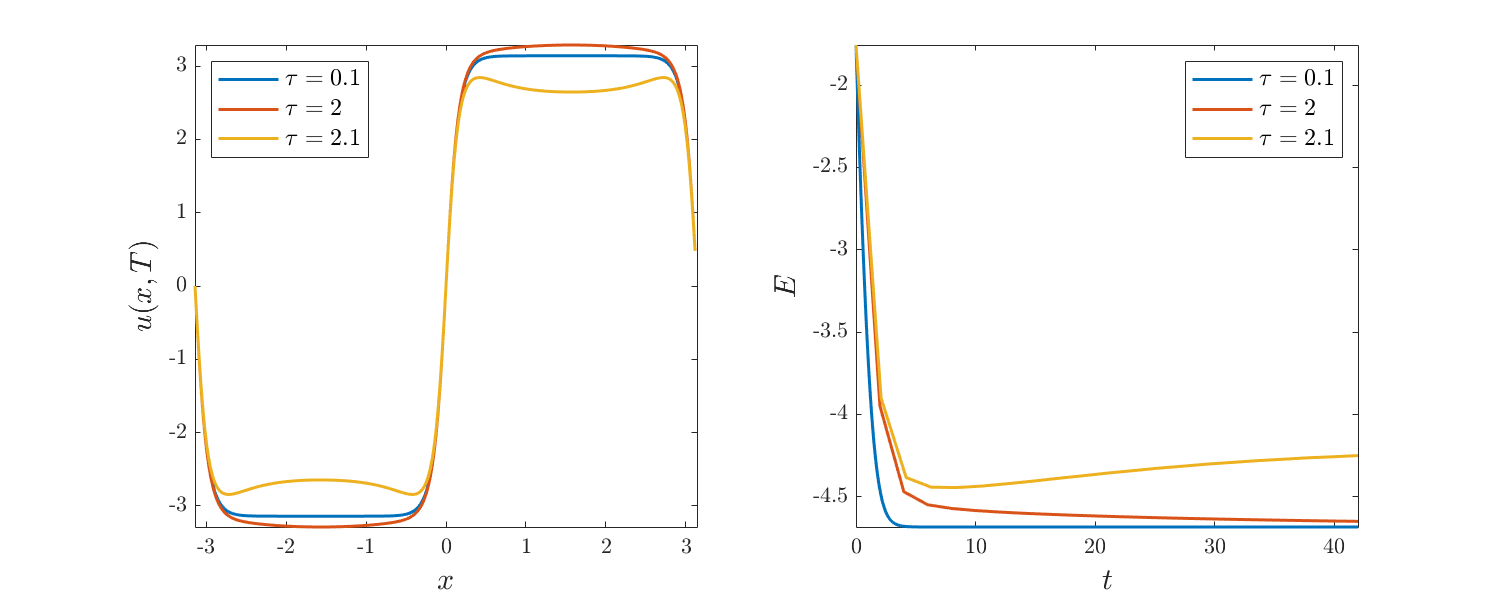}
\caption{\small Numerical solutions $u(x,T)$ and energy evolutions of the 1D sine-Gordon equation with different time steps $\tau = 0.1,~2,~2.1$, where $T = 42$. }\label{fig:1DSG}
\end{figure}

\begin{example} \label{exam2DACsin}
{\em Consider the 2D sine-Gordon equation
\begin{align}\label{eq:SG2d}
& \partial_t u = \ka^2\Delta u + \sin(u), \quad\mbox{on }\mathbb T^2=[-\pi,\pi]^2,
\end{align}
with $\ka = 0.2$ and $u_0(x,y)= \pi\sin(x)\sin(y)$.
}
\end{example}

We compare the numerical solution of \eqref{eq:SG2d} with the standard Allen-Cahn equation with polynomial potential
\begin{align}\label{eq:ac2d}
 \partial_t u = \ka^2\Delta u + u-u^3, \quad\mbox{on }\mathbb T^2=[-\pi,\pi]^2.
\end{align}
This Allen-Cahn equation is solved using the following BDF2 scheme:
\begin{equation}\label{eq:sch3}
\frac{3u^{n+1}-4u^n+u^{n-1}}{2\tau}=\ka^2\Delta u^{n+1}+2 (u^n-(u^n)^3)-(u^{n-1}-(u^{n-1})^3).
\end{equation}
For the spatial discretization, we use the pseudo-spectral method with the number of Fourier modes $N_x\times N_y = 256\times 256$.

\begin{figure}[H]
\centering
\includegraphics[width=0.37\textwidth]{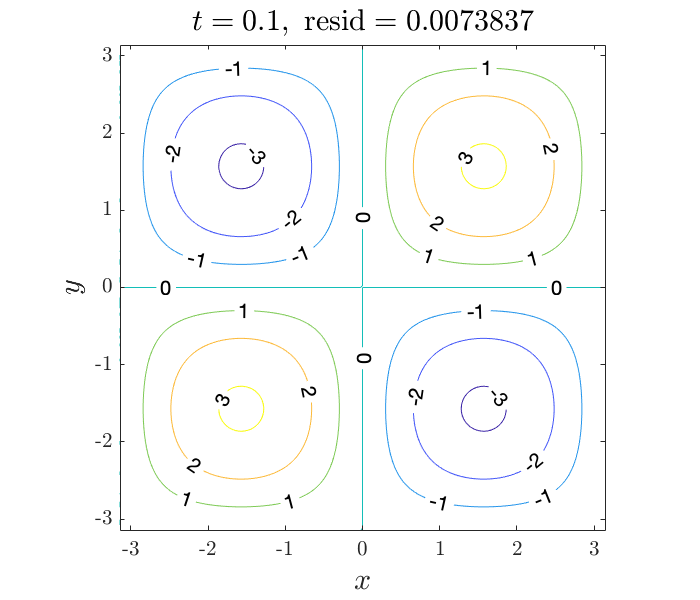}
\includegraphics[width=0.37\textwidth]{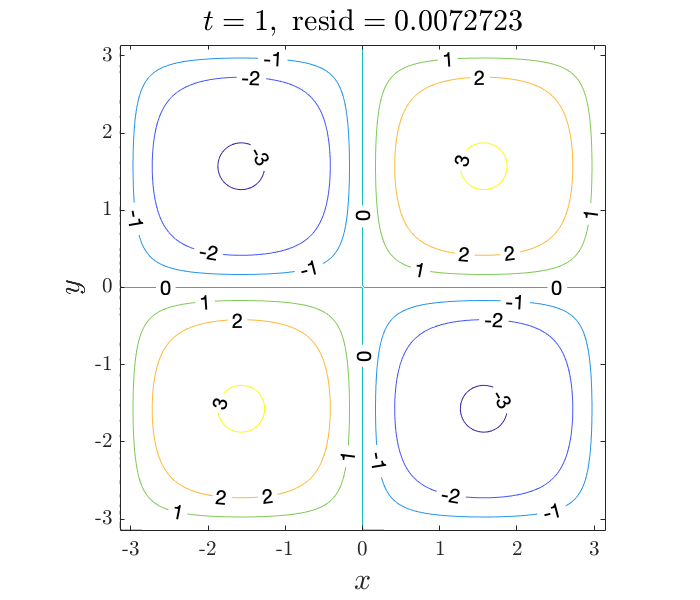}\\
\includegraphics[width=0.37\textwidth]{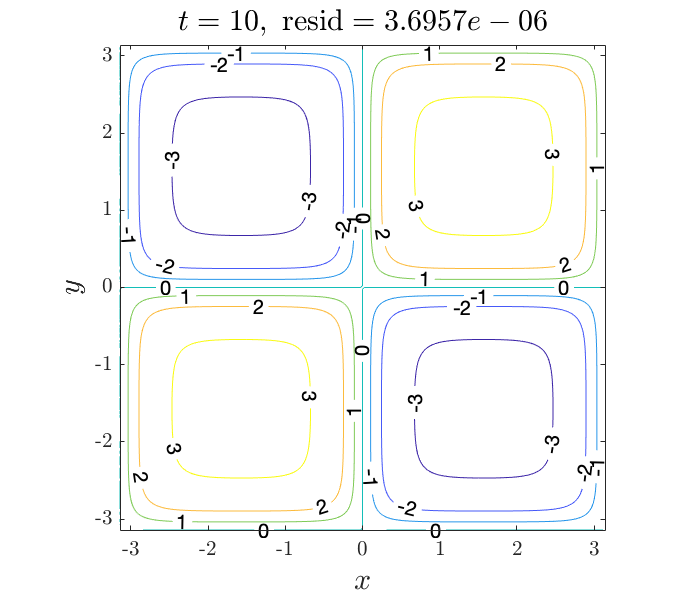}
\includegraphics[width=0.37\textwidth]{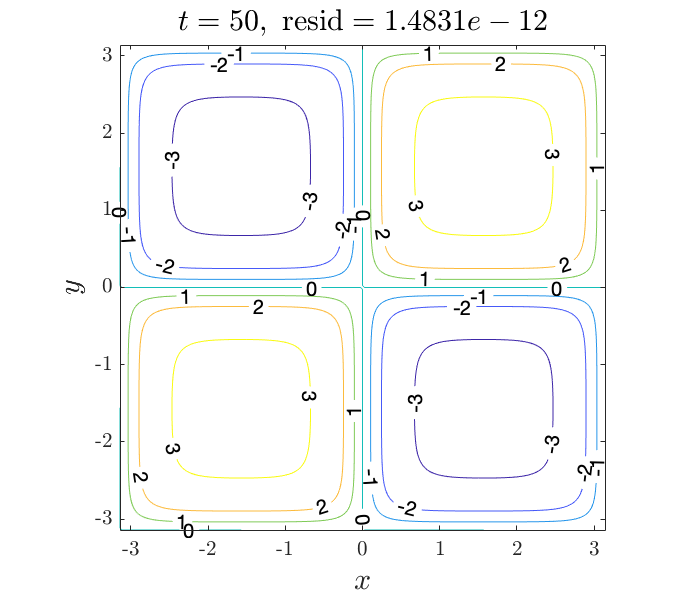}
\caption{\small Example \ref{exam2DACsin}: Dynamics of 2D sine-Gordon equation \eqref{eq:SG2d} using the second-order BDF2 scheme \eqref{eq:sch2} where $\ka = 0.2$, $u_0 =\pi\sin(x)\sin(y)$, $\tau= 0.01,~N_x=N_y = 256$. }\label{fig:2dsg}
\centering
\includegraphics[width=0.37\textwidth]{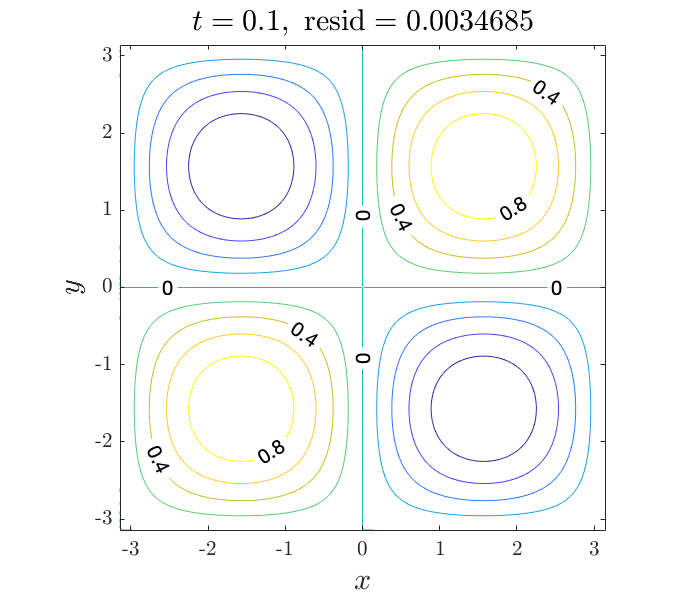}
\includegraphics[width=0.37\textwidth]{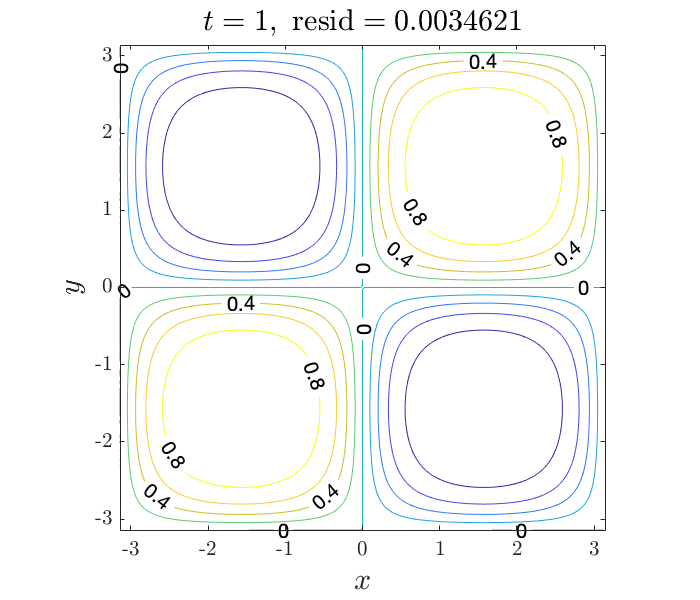}\\
\includegraphics[width=0.37\textwidth]{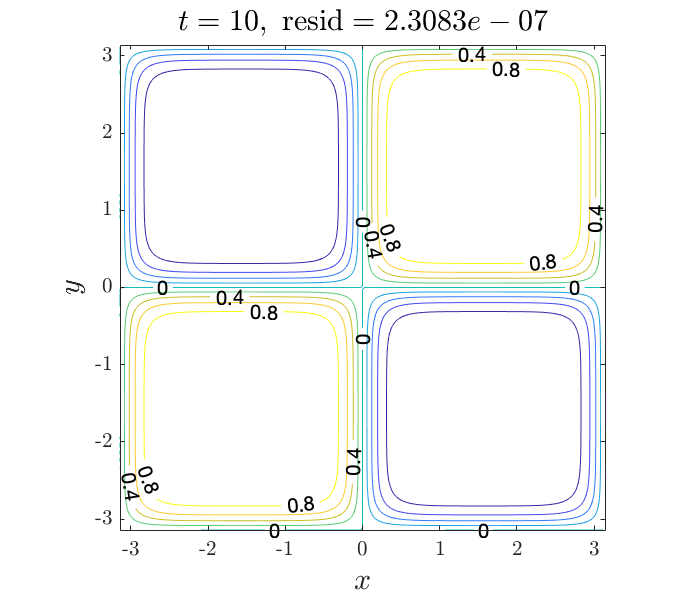}
\includegraphics[width=0.37\textwidth]{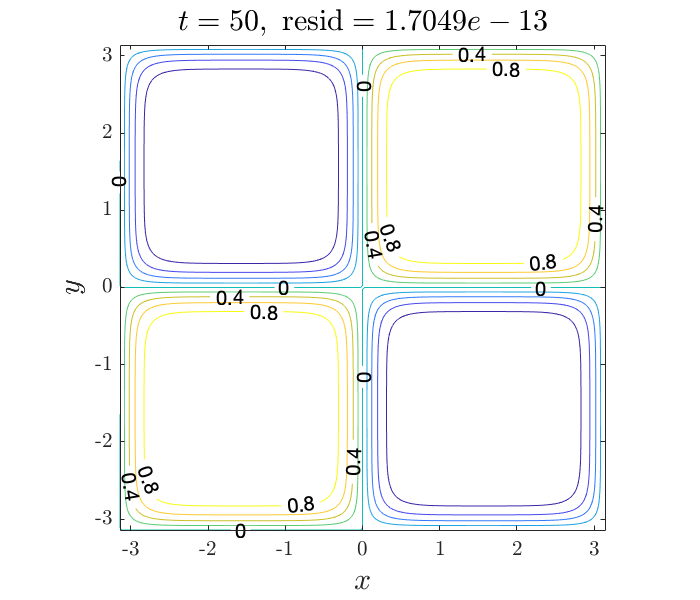}
\caption{\small Example \ref{exam2DACsin}: Dynamics of 2D Allen-Cahn equation \eqref{eq:ac2d} using the second-order BDF2 scheme \eqref{eq:sch3} where $\ka = 0.2$, $u_0 =\sin(x)\sin(y)$, $\tau= 0.01,~N_x=N_y = 256$. }\label{fig:2dac}
\end{figure}

The computed solutions are illustrated  in Figure \ref{fig:2dsg} and \ref{fig:2dac}.
It can be observed that both models exhibit strikingly similar patterns.
The corresponding energy evolutions are presented in Figure \ref{fig:2denergy}.

\begin{figure}[!h]
\centering
\includegraphics[width=0.43\textwidth]{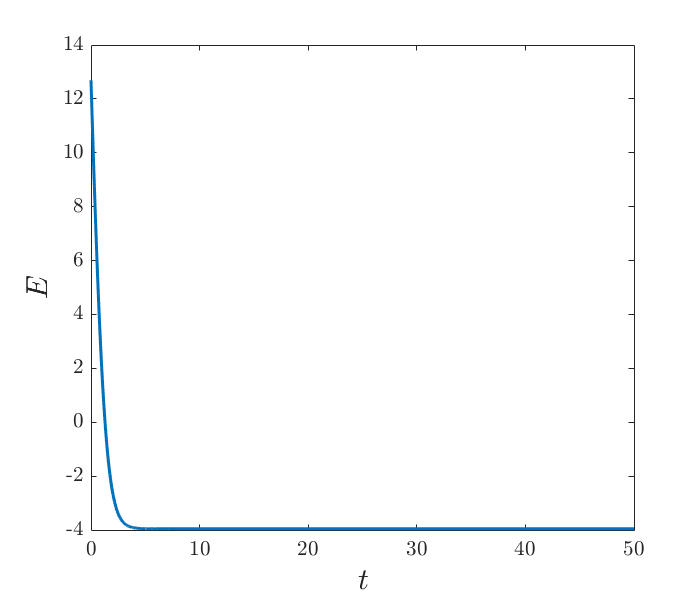}
\includegraphics[width=0.43\textwidth]{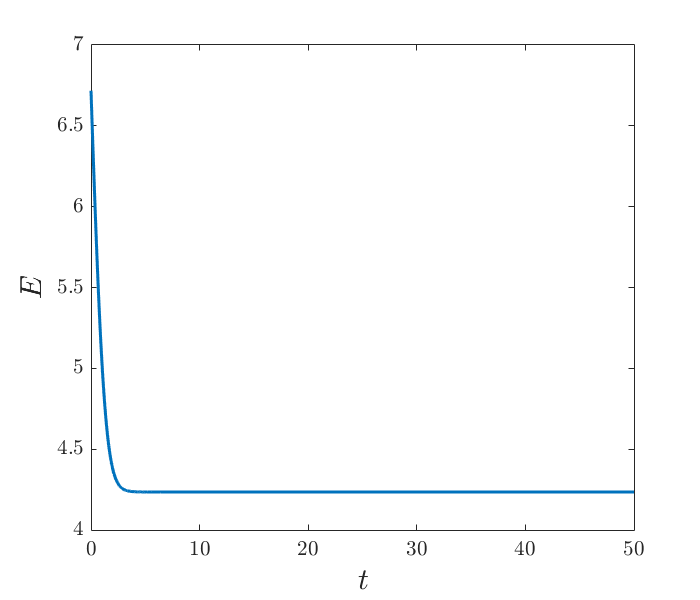}
\caption{\small Energy evolutions for the 2D sine-Gordon simulation in Figure \ref{fig:2dsg} (left) and the 2D Allen-Cahn simulation in Figure \ref{fig:2dac} (right), where $\ka = 0.2$, $\tau= 0.01,~N_x=N_y = 256$. 
Here $u_0 =\pi\sin(x)\sin(y)$ for sine-Gordon and $u_0 =\sin(x)\sin(y)$ for Allen-Cahn. 
The BDF2 pseudo-spectral schemes \eqref{eq:sch2} and \eqref{eq:sch3} are used respectively. }\label{fig:2denergy}
\end{figure}

\section{Concluding remarks}
In this work we introduced a parabolic sine-Gordon (PSG) model which is a special phase field
model with cosine-type potential.  We proved a fundamental maximum principle for the parabolic
sine-Gordon model with periodic boundary conditions in all dimensions.
In the one-dimensional
case we classified all bounded steady states and exhibit some explicit solutions. 
We considered two types of numerical discretization for PSG: one
is first order IMEX, and the other is  BDF2 IMEX. For both schemes we do not use any additional stabilization term. Without appealing to the maximum principle, we rigorously prove the energy stability of the numerical schemes under nearly sharp and quite mild time step constraints. 
By several numerical examples  we demonstrated the striking
similarity of the PSG model with the standard Allen-Cahn equations with double
well potentials.  Due to its inherent benign nonlinear structure, it appears that the PSG model is particularly amenable to $L^{\infty}$-analysis. In prospect we hope the PSG model
will have a ubiquitous presence in phase field simulations.

\frenchspacing
\bibliographystyle{plain}


\end{document}